\documentclass{article}

\RequirePackage[OT1]{fontenc}
\RequirePackage{amsmath,amsfonts,amsthm}
\usepackage{authblk}
\newtheorem{theorem}{Theorem}

\newtheorem{lemma}{Lemma}
\newtheorem{proposition}{Proposition}
\newtheorem{remark}{Remark}
\DeclareMathOperator*{\argmax}{argmax}



\newcommand{\mJ}{{\mathcal J}}

\newcommand{\mS}{{\mathcal S}}

\newcommand{\bN}{{\mathbb N}}
\newcommand{\bR}{{\mathbb R}}
\newcommand{\bZ}{{\mathbb Z}}
\newcommand{\bP}{{\mathbb P}}
\newcommand{\bE}{{\mathbb E}}
\newcommand{\bI}{{\mathbb I}}


\newcommand{\coS}{<\!{\mathcal S}\! >}
\newcommand{\coqS}[1]{<\!{\mathcal S}\wedge Q(#1)\! >}


\begin{document}
%
%
%
%
 \title{Stability of MaxWeight-($\alpha$,$g$)}
\author[1]{Neil Walton}
\affil[1]{University of Amsterdam, n.s.walton@uva.nl}
\date{}
%
%


\maketitle

\abstract{
We consider a single-hop switched queueing network. 
Amongst a plethora of applications, these networks have been used to model wireless networks and input queued switches.
The MaxWeight scheduling policies have proved popular, chiefly, because they are throughput optimal and do not require explicit estimation of arrival rates.

In this article, we prove the same throughput optimality property for a generalization of the MaxWeight policy called the \emph{MaxWeight-($\alpha,g$) policy}. 
In brief, given parameter $\alpha>0$ and concave functions $g=(g_j$: $j\in\mJ)$, the MaxWeight-($\alpha,g$) policy chooses a random schedule which solves
\begin{align*}
&\text{maximize} &&\sum_{j\in\mJ} g_j(s_j)Q_j^\alpha\\
&\text{over} && s\in\coS,
\end{align*}
where $Q=(Q_j:j\in\mJ)$ is the vector of queue sizes and where, allowing for randomization, the maximization is taken over a set of admissible schedules $\coS$. These throughput optimal, myopic scheduling policies allow for scheduling choices similar to those found in bandwidth sharing networks -- a further well studied model of Internet congestion.
}

\section{Introduction}\label{sec1}
The MaxWeight scheduling policies were first introduced by Tassiulas and Ephremides \cite{TaEp92} as a model of wireless communication. Their policy was applicable to the class of switched queueing networks, where there are constraints on which queues can be served simultaneously.
Subsequently, as a model of Internet protocol routers, McKeown et al. \cite{MMAW99} applied this paradigm to the example of input-queued switches and coined the term \emph{throughput optimal}.

Roughly stated, a queue scheduling policy is throughput optimal if it is stable for every arrival rate for which there exists a stabilizing policy. If the vector of arrival rates is known then throughput optimality is trivial: at each time, one can chose a random schedule whose average service rate dominates each arrival rate. However, in practice, explicit knowledge of arrival rates is not achievable, particularly when rates may vary over time. A striking feature of the MaxWeight is that it is throughput optimal {and} it is \emph{myopic} -- meaning that only current queue state information is required to choose a schedule.  
For this reason, the MaxWeight policy has proved popular. Accordingly, the policy has been generalized.  Before proceeding with a formal analysis, let's informally discuss these generalizations; let's discuss Bandwidth Sharing Networks, a different but somewhat related model of Internet control; and let's discuss the MaxWeight-($\alpha ,g$) policy from which this paper is titled.

\subsubsection*{MaxWeight-$f$}
Currently, MaxWeight and its generalizations can be expressed in terms of the following MaxWeight-$f$ policy.
The MaxWeight-$f$ policy chooses a schedule at each time which optimizes
\begin{align*}
&\text{maximize} &&\sum_{j\in\mJ} s_jf_j(Q_j)\\
&\text{over} && s\in\coS.
\end{align*}
Here $Q_j$ is the queue sizes for each queue $j\in\mJ$, and $s_j$ is the mean number of queue $j$ jobs served under a random schedule in $\coS$. The choice of $\coS$ allows for randomization; nonetheless, randomization needn't be used as there always exists a deterministic solution to the above optimization. 

The MaxWeight policy introduced by Tassiulas and Ephremides \cite{TaEp92} is the case where $f$ is the identity function. A broad class of functions $f$ are proven to be throughput optimal by Meyn \cite{Me09}. Amongst these choices, the MaxWeight-$\alpha$ policies have proved popular \cite{AKRSVW04,St04,ShWi06,ShWi11,JMMT11,ShWi12}. Here, each $f_j$ is a power function: $f_j(s_j)=s_j^\alpha$.

So, thus far, generalizations of the MaxWeight policy alter the dependence on queue sizes, $Q_j$, with an appropriately chosen function, say $f_j(Q_j)=Q_j^\alpha$.


\subsubsection*{Bandwidth Sharing Networks}
Somewhat separate to this work on switch networks, researchers have considered bandwidth sharing networks \cite{MaRo99,VLK01,BoMa01,KeWi04,EBZ07,GoWi09,AyMa09,Le12,PTFA12}. These continuous-time Markov processes form a flow level model of Internet file transfer. 
Here, using slightly unconventional notation, it is assumed that the bandwidth allocation achieved by a congestion control protocol solves a \emph{network utility optimization} of the form
\begin{subequations}\label{netutil}
\begin{align}
&\text{maximize} &&\sum_{j\in\mJ} g_j\Big(\frac{s_j}{Q_j}\Big) Q_j\\
&\text{over} && s\in\coS.
\end{align}
\end{subequations}
Here $Q_j$ represents the queue size of class $j$ jobs to be transfered across the network and $s_j$ is the bandwidth allocated to class $j$. It is assumed that the allocation belongs to some convex region $\coS$ and that $g_j:\bR_+\rightarrow \bR$ is an increasing concave function.

Switch networks serve jobs one-by-one subject to scheduling constraints in a first-in-first-out manner. By contrast, bandwidth sharing networks serve jobs simultaneously subject to rate constraints in a processor-sharing manner. Bandwidth sharing networks have been shown to be throughput optimal in a number of different settings, \cite{MaRo99,VLK01,BoMa01,Ma07}. 

So, generalizations of the MaxWeight policy alter the dependence on queue sizes, $Q_j$, with an appropriately chosen function, $f_j$. By contrast, bandwidth sharing networks alter the dependence on schedules, $s_j$, with an appropriately chosen function, $g_j$.
It would seem possible that one could form scheduling policies for switch networks based on network utility optimization, as described above. To the knowledge of this author, discussions to this end are first made by Wischik and Shah \cite{ShWi11} and Zhong \cite{Zh12} for the weighted $\alpha$-fair bandwidth allocation policies \cite{MoWa00}.

\subsubsection*{MaxWeight-$(\alpha,g)$}
We will discuss precise terms shortly, but, in brief, the MaxWeight-($\alpha,g$) policy chooses a schedule which optimizes
\begin{align*}
&\text{maximize} &&\sum_{j\in\mJ} g_j(s_j)Q_j^\alpha\\
&\text{over} && s\in\coS,
\end{align*}
where $Q_j\in\bZ$ is the queue sizes for each queue $j\in\mJ$ and where $s_j$ is the mean number of jobs served under random schedule in $\coS$. Here, for $j\in\mJ$, $g_j$ is a convex increasing function and $\alpha$ is a positive real number.

The MaxWeight-$(\alpha,g)$ policy generalizes the MaxWeight-$\alpha$ policy, it is myopic, and it incorporates a functional dependence similar to the network utility optimization policies, described above. As we will discuss, they allow the scheduling analogue of weight $\alpha$-fair bandwidth allocation \cite{MoWa00}.
 Although it is clear we can define a MaxWeight-$(f,g)$ schedule where we replace the function $Q_j^\alpha$ with a function $f_j(Q_j)$, we do not provide an analysis of this policy.

Our stability analysis for these policies can be further justified because it is conjectured that such policies can have optimal behaviour. In particular, Zhong \cite[pg. 102]{Zh12} conjectures the proportional fair scheduling policy -- where $\alpha=1$ and $g_j(s)=\log(s)$ -- has optimal queue size scaling in heavy traffic for an input-queued switch. Here scaling is taken both with respect to traffic intensity and network size. 

In this paper, we consider the MaxWeight-($\alpha,g$) as a generalized policy for switch networks. So why not consider it as an appropriate generalization for bandwidth networks? This is because the optimal network utility maximization \eqref{netutil} is not a scheduling decision but an equilibrium reached by competing Internet flows. Thus one must first argue that the MaxWeight-($\alpha,g$) objective is achieved by a congestion control protocol  before such an analysis.\vspace{0.3cm}

\subsubsection*{Contribution}
The main contribution of this paper is to prove that the MaxWeight-($\alpha,g$) policy is throughput optimal under independent identically distributed arrivals.

For MaxWeight, the throughput optimality is the foremost property associated with this myopic policy.
So, although there are a raft of other open problems which one could ask, in this article we focus on addressing this first order stability question.

In the conclusions, we will discuss a number of other open questions associated with the MaxWeight-($\alpha,g$)  policy some of which have been answered for the MaxWeight policy and for Bandwidth Sharing Networks.

\section{Related work}\label{sec2}
As discussed, the MaxWeight policy was first defined by Tassiulas and Ephremides \cite{TaEp92} and then considered in the context of input queued switches by McKeown et al. \cite{MMAW99}. The generalized MaxWeight policies are subsequently analyzed by a number of authors \cite{AKRSVW04,St04,DaLi05,Me09}. The behaviour of MaxWeight has been studied in a number of limiting regimes: fluid limits \cite{DaPr00}, heavy traffic \cite{St04}, large deviations \cite{Su10,Su11}, and overload \cite{ShWi11}. Notably, Shah and Wischik \cite{ShWi11} in their study of overloaded networks analyse both bandwidth sharing networks and switch networks together and, to the best of knowledge of this author, they are this first to suggest using a network utility function for switch scheduling. Further work combining switched networks with bandwidth sharing can be found in Moallemi and Shah \cite{MoSh10},  Zhong \cite{Zh12}, and Shah et al. \cite{ShWaZh12}.

It has been argued that congestion control protocols implicitly solve a network utility optimization \cite{Ke97,Sr04}. This led to bandwidth sharing networks \cite{MaRo99}, which model the stochastic arrival and departure of utility optimizing Internet flows. Analogous to MaxWeight, there are numerous stability proofs \cite{VLK01,BoMa01,Ye03,GoWi09,Le12,PTFA12}, and these policies have similarly been analyzed in different limiting regimes: fluid limits \cite{KeWi04}, heavy traffic \cite{KKLW09}, large deviations \cite{Ma07}, and overload \cite{EBZ07}.

We prove the stability of the MaxWeight-$(\alpha,g)$ policy using method of fluid limits adopted by Rybko and Stolyar \cite{RySt92}, Dai \cite{Da95,Da95b}, and Bramson \cite{Br08}. This method is applied to the MaxWeight policy by \cite{DaPr00}. However, our proof more closely follows the fluid analysis of Kelly and Williams \cite{KeWi04} and stability proof of Bonald and Massoulie \cite{BoMa01} which are applied to bandwidth sharing networks.

MaxWeight policies apply to switched networks and authors have investigated numerous applications. These include wireless networks \cite{TaEp92}, input queued switches \cite{MMAW99}, data centers \cite{ShWi11}, semiconductor wafer fabrication \cite{DaLi05}, road traffic management \cite{Va09}, on-line auctions \cite{TaSr12} and call centers \cite{MaSt04}.

\section{Structure}\label{sec3}
In the next section, we introduce the main notation used throughout the paper. In Section \ref{sec5}, we formally define the MaxWeight-$(\alpha,g)$ policy and we state the main result of the paper: Theorem \ref{MainThrm}. We prove Theorem \ref{MainThrm}, by analyzing the stability of an appropriate fluid model. We define this fluid model in Section \ref{sec6}. We, also, state Proposition \ref{FluidLimThrm}. This fluid limit result justifies the fluid model's connection with the MaxWeight-$(\alpha,g)$ queueing process. The proof of Proposition \ref{FluidLimThrm} is given in the Appendix. In Section \ref{sec7}, we prove the stability of our fluid model. This proof is the crux of the paper. Following this, in Section \ref{sec8}, we combine Propositions \ref{FluidLimThrm} and \ref{FluidStabProof}  to prove the main result, Theorem \ref{MainThrm}. We then conclude the paper discussing possible lines of further research.

\section{Queueing Network Notation}\label{sec4} 
We define a discrete-time queueing network where there are restrictions on which
queues can be served simultaneously. 

We let the finite set $\mJ$ index the set of queues. 
We let $a(t)=(a_j(t):j\in\mJ)\in\bZ_+^\mJ$ be the number of arrivals occurring at each queue
at time $t\in\bN$. We assume $\{ a(t) \}_{t=1}^\infty$ is a sequence of independent identically distributed random vectors 
with finite mean $\bar{a}\in(0,\infty)^\mJ$ and finite variance $\bE\big[ a_j(t)^2\big] \leq K$ for all $j\in\mJ$.\footnote{The assumption of finite variance is not essential; however, it allows bounds more convenient for our proofs.}

We let the finite set $\mS$ be the set of schedules. 
Each $\sigma=(\sigma_j:j\in\mJ)\in\mS$ is a vector in $\bZ_+^\mJ$ where $\sigma_j$ gives the number of jobs that will be served from queue $j$ under schedule $\sigma$. We assume the zero vector, $\textbf{0}$, belongs to $\mS$.
We let $\coS$ be the convex combination of schedules in $\mS$. We assume $\coS$ has a non-empty interior.
Note the extreme points of $\coS$ are a subset of $\mS$. 
So any point in $\coS$ can be expressed as a convex combination of points in $\mS$. 

For $q,s\in\bR$, we define $q\wedge s=\min\{q,s\}$. If $q,s\in\bR^\mJ$ then $q\wedge s= (q_j\wedge s_j: j\in\mJ)$.
For a vector $Q\in\bZ_+^\mJ$, we define
\begin{align*}
\mS\wedge Q = \{ \sigma \wedge Q : \sigma \in \mS \}.
\end{align*}
We let $<\!\mS\wedge Q\! >$ be the convex combination of points in $\mS \wedge Q$. 
Notice, to prevent a queue having a negative number of jobs, $\sigma\wedge Q$ is the number of jobs that would be served under schedule $\sigma$ when $Q$ gives the vector of queue sizes.

We let $Q(0)=(Q_j(0):j\in\mJ)$ be the number of jobs in each queue at time $t=0$.
From a sequence of schedules $\{ \sigma(t) \}_{t=1}^\infty$, we can define the queue size vector $Q(t)=(Q_j(t):j\in\mJ)$ by
\begin{equation}
Q_j(t+1)= Q_j(t) - \sigma_j(t+1)  + a_j(t+1),
\end{equation}
for $j\in\mJ$, and $t\in \bN$. Recall, for positive queue sizes, it is required that $$\sigma_j(t+1)\in \mS \wedge Q(t),$$
for all $t\in\bZ_+$.\footnote{Observe, this choice of notation is equivalent to defining the queueing process by $Q_j(t+1)= [ Q_j(t) - \sigma_j(t+1)]_+  + a_j(t+1)$.} 

Given $(Q(t): t\in\bN)$ defines a Markov chain, we say the queue size process is throughput optimal if it is positive recurrent whenever the vector of arrival rates, $\bar{a}$, belongs to the interior of $\coS$.

\section{MaxWeight-($\alpha$,$g$)}\label{sec5}
We now define the MaxWeight-$(\alpha,g)$ policy.
We let $\alpha$ be a positive real number and, for each $j\in\mJ$, we let $g_j:\bR_+\rightarrow \bR$  be a strictly increasing, differentiable, strictly concave function. Given a queue size vector $Q(t-1)=(Q_j(t-1):j\in\mJ)$ at time $t-1$, we define $\bar{\sigma}(t)=(\bar{\sigma}_j(t):j\in\mJ)$ to be a solution to the optimization
\begin{subequations}\label{MW-af}
\begin{align}
&\text{maximize} &&\sum_{j\in\mJ} g_j(s_j)Q_j(t-1)^\alpha\label{MW-af-1}\\
&\text{over} && s\in\coqS{t-1}.\label{MW-af-3}
\end{align}
\end{subequations}
 
In general, $\bar{\sigma}(t)$ need not belong to the set of schedules $\mS\wedge Q(t-1)$. However, $\bar{\sigma}(t)$ is a convex combination of points in $\coqS{t-1}$. Thus we let $\sigma(t)$ be a random variable with support on $\mS\wedge Q(t-1)$ and mean $\bar{\sigma}(t)$. The MaxWeight-$(\alpha,g)$ scheduling policy is the policy that chooses schedule $\sigma(t)$ at each time $t\in\bN$.

To be concrete, we assume the random variables $\sigma(t)$ are, respectively, a function of $Q(t-1)$ and an independent (uniform) random variable. Expressed differently, $\bE \left[ \sigma(t) | Q(t-1) \right]$, $t=1,2,...$, are independent random variables. This ensures that 
the queue size process $\{ Q(t)\}_{t=0}^\infty$ associated with the MaxWeight-$(\alpha,g)$ scheduling policy is a discrete-time Markov chain. Further, the constraints \eqref{MW-af-3} ensure that a schedule never exceeds the queue it serves. This said, the constraint induced by queue size vector $Q(t-1)$ is only relevant when a queue may empty and, indeed, could be chosen in other ways for these instances. 

For later use, for $q\in\bR_+^\mJ$, we let $\hat{\sigma}(q)$ be the solution to the optimization problem
\begin{subequations}\label{MW-qq}
\begin{align}
&\text{maximize}&& \sum_{j\in\mJ} g_j(s_j) q_j^{\alpha}\label{MW-qq-1}\\
&\text{over}&& s\in <\! \mS \wedge q\! >.\label{MW-qq-2}
\end{align}
\end{subequations}

\begin{remark}
We remark that the MaxWeight-$(\alpha,g)$ policy coincides with the MaxWeight-$\alpha$ policy when $g_j(s_j)=s_j$ and with the MaxWeight policy for $\alpha=1$ when $g_j(s_j)=s_j$. An addition class of further interest is the $\alpha$-fair policies which, for $\alpha>0,$ is defined from the maximization
\begin{align*}\label{MW-ab}
&\max_{s\in\coqS{t-1}} \sum_{j\in\mJ}\frac{ s_j^{1-\alpha}}{1-\alpha}Q_j(t-1)^{\alpha}, && \text{if } \alpha\neq 1,\\
&\max_{s\in\coqS{t-1}}\sum_{j\in\mJ} \log(s_j) Q_j(t-1), && \text{if } \alpha=1.
\end{align*}
As discussed in the introduction, the $\alpha$-fair policies were first introduced and studied in the context of Bandwidth Sharing Networks, \cite{MoWa00,BoMa01}.
For switch-networks, the $\alpha$-fair policy is first defined by Shah and Wischik \cite{ShWi11}.
The $\alpha=1$ policy is referred to as the proportionally fair policy \cite{Ke97}. For input-queued switches, it is conjectured that the proportionally fair switch scheduling policy has optimal queue size scaling in heavy traffic \cite{Zh12}.
\end{remark}

The main result of this article is the following proof of throughput optimality.

\begin{theorem}\label{MainThrm}
For the MaxWeight-$(\alpha,g)$ policy,
if the vector of average arrival rates $\bar{a}$ belongs to the interior of $\coS$
then the queue size process $(Q(t), t\in\bZ_+)$  is positive recurrent. 
\end{theorem}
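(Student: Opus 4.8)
The plan is to prove positive recurrence through the standard fluid-limit programme of Rybko--Stolyar and Dai: one shows that every (sub)sequential fluid limit of the scaled queue-length process $Q(nt)/n$ is an absolutely continuous solution of a deterministic \emph{fluid model}, and that every such fluid solution is driven to the origin in finite time; Dai's theorem then converts this fluid stability, together with the finite-variance assumption on the arrivals, into positive recurrence of the Markov chain $(Q(t):t\in\bZ_+)$. The fluid-limit step is precisely the content of Proposition~\ref{FluidLimThrm} (proved in the appendix), so I would take for granted that a fluid solution satisfies, for each $j\in\mJ$,
\begin{equation*}
q_j(t)=q_j(0)+\bar a_j t-\int_0^t \hat\sigma_j(q(u))\,\mathrm{d}u,
\end{equation*}
where $\hat\sigma(q)$ is the maximiser defined in \eqref{MW-qq}. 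The key structural fact is that $\hat\sigma$ is scale-invariant and that, at any $q$ with all coordinates strictly positive, it is simply the maximiser of $\sum_{j} g_j(s_j)q_j^\alpha$ over the full convex set $\coS$. The whole weight of the argument then falls on the stability of this fluid model (Proposition~\ref{FluidStabProof}), which is the crux.

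For the fluid stability I would use the separable Lyapunov function
\begin{equation*}
L(q)=\sum_{j\in\mJ} g_j'(\bar a_j)\,\frac{q_j^{\alpha+1}}{\alpha+1},
\end{equation*}
whose coordinate weights are the marginal utilities $g_j'(\bar a_j)>0$ evaluated \emph{at the arrival rate}. Differentiating along a fluid solution gives $\dot L=\sum_{j} g_j'(\bar a_j)\,q_j^\alpha\bigl(\bar a_j-\hat\sigma_j(q)\bigr)$. The naive choice $\sum_j q_j^{\alpha+1}$ fails here precisely because the policy maximises $\sum_j g_j(s_j)q_j^\alpha$ rather than the linear weight $\sum_j s_j q_j^\alpha$, and the weights $g_j'(\bar a_j)$ are exactly what repair this. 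The argument is a two-line estimate. First, concavity of $g_j$ makes $g_j'$ non-increasing, so $\bigl(g_j'(\bar a_j)-g_j'(\hat\sigma_j)\bigr)\bigl(\bar a_j-\hat\sigma_j\bigr)\le 0$, whence termwise $g_j'(\bar a_j)q_j^\alpha(\bar a_j-\hat\sigma_j)\le g_j'(\hat\sigma_j)q_j^\alpha(\bar a_j-\hat\sigma_j)$. Second, since $\bar a\in\mathrm{int}\,\coS$ there is $\delta>0$ with $(1+\delta)\bar a\in\coS$, so the first-order optimality (variational) inequality for the concave program \eqref{MW-qq}, applied at the feasible point $(1+\delta)\bar a$, yields $\sum_j g_j'(\hat\sigma_j)q_j^\alpha(\bar a_j-\hat\sigma_j)\le -\delta\sum_j g_j'(\hat\sigma_j)q_j^\alpha\bar a_j$. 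Chaining the two gives
\begin{equation*}
\dot L\le -\delta\sum_{j\in\mJ} g_j'\bigl(\hat\sigma_j(q)\bigr)\,q_j^\alpha\,\bar a_j<0\qquad\text{whenever } q\neq 0 .
\end{equation*}

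To close the argument I would exploit homogeneity: $L$ is homogeneous of degree $\alpha+1$ while the drift bound is homogeneous of degree $\alpha$, continuous, and strictly positive away from the origin, so $\dot L\le -c\,L^{\alpha/(\alpha+1)}$ for some $c>0$. Because the exponent is strictly below one, this differential inequality forces $L(q(t))$, and hence $q(t)$, to hit $0$ within a time bounded by a fixed function of $\|q(0)\|$, uniformly over bounded initial data, which is exactly the fluid-stability hypothesis Dai's theorem requires. Combining Propositions~\ref{FluidLimThrm} and \ref{FluidStabProof} then delivers the theorem.

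The step I expect to be genuinely delicate is not the interior drift computation but the behaviour of the fluid model on the boundary $\{q:\,q_j=0 \text{ for some } j\}$. There $\hat\sigma$ is the maximiser over the truncated set $<\!\mS\wedge q\!>$ rather than $\coS$, $\hat\sigma$ need not be continuous, and the comparison allocation $(1+\delta)\bar a$ need not be feasible; one must argue that the coordinates with $q_j=0$ drop out of $\dot L$ (since $q_j^\alpha=0$ there) and replace $(1+\delta)\bar a$ by a truncated feasible allocation obtained by capping a convex representation of $(1+\delta)\bar a$ at $q$. Establishing enough regularity of $\hat\sigma(\cdot)$ and of the fluid trajectories to justify differentiating $L$ almost everywhere — machinery that is already subsumed in Proposition~\ref{FluidLimThrm} — is the other place where care is needed.
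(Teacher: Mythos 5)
Your overall programme coincides with the paper's: fluid limit (Proposition \ref{FluidLimThrm}), fluid stability via a weighted Lyapunov function $\sum_j w_j q_j^{1+\alpha}/(1+\alpha)$, and conversion to positive recurrence (you invoke Dai's theorem; the paper gives a self-contained multiplicative-Foster argument, but it explicitly notes that the general results of Dai/Bramson could be applied, so this is the same route). The only real difference is how the negative drift is extracted. The paper fixes $\rho=(1+\epsilon)\bar a$ in the interior of $\coS$, takes constant weights $g_j'(\rho_j)$, and uses concavity plus suboptimality of $\rho$ to get the tangent-line inequality $(\sigma^*(q)-\rho)\cdot\nabla G_q(\rho)>0$, so every derivative is evaluated at the fixed interior point $\rho$. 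You take weights $g_j'(\bar a_j)$ and chain (i) monotonicity of $g_j'$ with (ii) the first-order optimality condition at the optimizer, tested against the feasible point $(1+\delta)\bar a$. Your chain is valid, but the paper's variant buys two things worth noting: it never evaluates $g_j'$ at or near $0$ (which matters when $g_j'(0^+)=\infty$, e.g.\ proportional fairness $g_j=\log$), and its drift bound $-\epsilon\sum_j g_j'(\rho_j)q_j^\alpha$ has constant coefficients, so the passage to $\dot L\le -cL^{\alpha/(1+\alpha)}$ is an immediate comparison of two continuous homogeneous functions on the unit sphere. In your version the intermediate bound $-\delta\sum_j g_j'(\hat\sigma_j(q))q_j^\alpha\bar a_j$ has $q$-dependent coefficients, and your assertion that it is ``continuous'' is not justified (the argmax need not vary continuously near the boundary, and $g_j'$ may be unbounded near $0$); this is easily repaired by bounding $g_j'(\hat\sigma_j)\ge g_j'(M)>0$, where $M$ dominates every schedule component, and then running your homogeneity argument with the constant weights $g_j'(M)\bar a_j$.

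There is one concrete error you should fix. The fluid dynamics you ``take for granted'' are not those of Proposition \ref{FluidLimThrm}: you write $\dot q_j=\bar a_j-\hat\sigma_j(q)$ with $\hat\sigma$ the maximizer of \eqref{MW-qq} over the truncated set $<\!\mS\wedge q\!>$ evaluated at the \emph{fluid} state $q$. But $\hat\sigma$ is not scale-invariant and does not equal $\sigma^*$ at all strictly positive $q$ (the truncation still binds when $q_j$ is positive but small); the correct link is Lemma \ref{QZ}, which relates $\hat\sigma$ evaluated at the \emph{unscaled} queue $Q^{(c)}$ to $\sigma^*(q)$. Worse, under your stated dynamics, absorption at the origin is impossible, since $\hat\sigma(0)=0$ forces $\dot q_j=\bar a_j>0$ there, so Proposition \ref{FluidStabProof} would be false for your model. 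The paper's fluid model \eqref{FluidEqns} instead has $\dot q_j=\bar a_j-\sigma_j^*(q)$ for $q_j>0$, with $\sigma^*$ maximizing over the \emph{full} set $\coS$ as in \eqref{MW-sq}, and $\dot q_j=0$ when $q_j=0$. Once you use this model, the boundary behaviour you flag as ``genuinely delicate'' dissolves: coordinates with $q_j=0$ contribute nothing to $\dot L$ (both because $\dot q_j=0$ there and because $q_j^\alpha=0$), and $(1+\delta)\bar a$ is always feasible for the program defining $\sigma^*$, so no truncated comparison allocation is needed. With these corrections your argument is sound and is essentially the paper's proof.
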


\section{Fluid model}\label{sec6}
In this section, we state a fluid model and a fluid limit associated with the MaxWeight-$(\alpha,g)$ policy. 
The proof of the fluid limit is given in Appendix \ref{FluidLimit}.

A positive, absolutely continuous process $q(t)= (q_j(t): j\in\mJ)$, $t\in\bR_+$, is a fluid model for the MaxWeight-$(\alpha,g)$ policy if, for $j\in\mJ$ and for almost every\footnote{By \emph{almost every}, we mean on all points except for a set of Lebesgue measure zero.} time $t\in\bR_+$,
\begin{subequations}\label{FluidEqns}
\begin{align}
\frac{d q_j}{dt} & = \bar{a}_j - {\sigma}^*_j(q(t)), && \text{if } q_j(t)>0,\\
\frac{d q_j}{dt} & = 0, && \text{if } q_j(t)=0,
\end{align}
\end{subequations}
where ${\sigma}^*(q)$ solves the optimization
\begin{subequations}\label{MW-sq}
\begin{align}
&\text{maximize}&&\sum_{j\in\mJ} g_j(s_j)q_j^\alpha\label{MW-sq-1}\\
&\text{over}&& s\in\coS.\label{MW-sq-2}
\end{align}
\end{subequations}
The key distinction between ${\sigma}^*(q)$, above, and $\hat{\sigma}(q)$, defined by optimization \eqref{MW-qq}, is that we remove the constraint where we cannot schedule more jobs than there are in each queue, i.e. compare \eqref{MW-qq-2} with \eqref{MW-sq-2}. 

We can formalize the sense that $q(t)$ is the limit of the MaxWeight-$(\alpha,g)$ policy. We let $\{ Q^{(c)} \}_{c\in\bN}$ be a sequence of versions of our queueing process for the MaxWeight-$(\alpha,g)$, where $||Q(0)||_1=c$. Here and hereafter, $||\cdot||_1$ is the $L_1$-norm. We define
\begin{align}
\bar{Q}^{(c)}(t)& =\frac{Q^{(c)}(\lfloor ct\rfloor)}{c},
\end{align}
for $t\in\bR_+$.  The following result formalizes a fluid model $q$, \eqref{FluidEqns}, as the limit of $\{ \bar{Q}^{(c)} \}_{c\in\bN}$. In informal terms, it states that the only possible limit of the sequence $\{ \bar{Q}^{(c)} \}_{c\in\bN}$ as $c\rightarrow\infty$ is a process $q$ satisfying \eqref{FluidEqns}.

\begin{proposition}[Fluid Limit]\label{FluidLimThrm}
The sequence of stochastic processes $\{ \bar{Q}^{(c)}\}_{c\in\bN}$ is tight\footnote{Recall a sequence of random processes $\{ \bar{Q}^{(c)}\}_{c}$ is tight if every subsequence of $\{ \bar{Q}^{(c)}\}_{c}$ has a weakly convergent subsequence.} with respect to the topology of uniform convergence on compact time intervals. Moreover,  any weakly convergent subsequence of  $\{ \bar{Q}^{(c)}\}_{c\in\bN}$ converges to a Lipschitz continuous process satisfying fluid equations \eqref{FluidEqns}.
\end{proposition}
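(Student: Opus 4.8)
The plan is to follow the now-standard fluid-limit recipe of Dai and of Kelly--Williams: first establish tightness from crude pathwise bounds, then identify every subsequential limit as a solution of \eqref{FluidEqns}. Iterating the one-step recursion gives the pathwise decomposition
\[
\bar{Q}^{(c)}_j(t) = \bar{Q}^{(c)}_j(0) + \bar{A}^{(c)}_j(t) - \bar{S}^{(c)}_j(t),
\]
where $\bar{A}^{(c)}_j(t) = c^{-1}\sum_{s=1}^{\lfloor ct\rfloor} a_j(s)$ is the scaled cumulative arrival process and $\bar{S}^{(c)}_j(t) = c^{-1}\sum_{s=1}^{\lfloor ct\rfloor}\sigma_j(s)$ the scaled cumulative service. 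Since $\mS$ is finite, every schedule is bounded by some constant $M$, so each $\bar{S}^{(c)}_j$ is Lipschitz in $t$ with constant $M$, uniformly in $c$. Because the arrivals are i.i.d.\ with finite mean, the functional strong law of large numbers gives $\bar{A}^{(c)}_j(t)\to \bar{a}_j t$ uniformly on compact intervals, almost surely; in particular $\{\bar{A}^{(c)}_j\}_c$ is tight with Lipschitz limits. Tightness of $\{\bar{Q}^{(c)}\}_c$ then follows from that of both terms together with $\|\bar{Q}^{(c)}(0)\|_1=1$, via the standard equicontinuity (Arzel\`a--Ascoli) criterion.

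It remains to characterise a subsequential limit $q$. Passing to a further subsequence we may assume $\bar{S}^{(c)}\to s$ and $\bar{Q}^{(c)}\to q$ uniformly on compacts, with $s$ Lipschitz; hence $q_j(t)=q_j(0)+\bar{a}_j t - s_j(t)$ is Lipschitz, so differentiable almost everywhere, and $q\ge 0$. At a differentiability point $t$ with $q_j(t)=0$, the nonnegativity of $q_j$ forces $t$ to be a local minimum, so $\dot{q}_j(t)=0$, which is the second line of \eqref{FluidEqns}. The whole content of the proof is therefore the first line: at almost every $t$ with $q_j(t)>0$ one must show $\dot{s}_j(t)=\sigma^*_j(q(t))$.

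The crux is to link the policy's schedule, which solves the truncated problem \eqref{MW-qq}, to the fluid optimiser $\sigma^*$, which solves the untruncated problem \eqref{MW-sq}. Two facts make this possible. First, because each $g_j$ is strictly concave and the weights $q_j^\alpha$ are positive, the objective \eqref{MW-sq-1} is strictly concave in every coordinate $s_j$ with $q_j>0$, so the maximiser $\sigma^*(q)$ is unique on those coordinates; Berge's maximum theorem then yields continuity of $q\mapsto\sigma^*_j(q)$ on $\{q:q_j>0\}$. Second, the argmax of a weighted objective is invariant under positive scaling of the weights, so dividing \eqref{MW-qq-1} by $c^\alpha$ shows $\hat{\sigma}(Q^{(c)})$ maximises $\sum_j g_j(s_j)\big(Q^{(c)}_j/c\big)^\alpha$ over $<\!\mS\wedge Q^{(c)}\!>$; and whenever $q_j(t)>0$ we have $Q^{(c)}_j(\lfloor cu\rfloor)\ge \tfrac12 c\,q_j(t)\gg M$ for $u$ near $t$ and $c$ large, so the truncation $\sigma\wedge Q^{(c)}$ leaves those coordinates untouched and the constraint set converges to $\coS$ on them. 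Combining, $\hat{\sigma}_j(Q^{(c)}(\lfloor cu\rfloor))\to\sigma^*_j(q(t))$. Finally, the realised schedule $\sigma(t)$ has conditional mean $\bar{\sigma}(t)=\hat{\sigma}(Q(t-1))$ and bounded, conditionally independent fluctuations, so $\sigma_j(s)-\bar{\sigma}_j(s)$ is a bounded martingale difference; a Doob $L^2$ argument gives $c^{-1}\sum_{s}\big(\sigma_j(s)-\bar{\sigma}_j(s)\big)\to0$ uniformly on compacts. Integrating $\hat{\sigma}_j$ over a short interval and letting $c\to\infty$ then yields $s_j(t+\delta)-s_j(t)=\int_t^{t+\delta}\sigma^*_j(q(u))\,du + o(\delta)$, and differentiation gives the desired identity.

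The main obstacle is the middle fact above: showing that on the coordinates with $q_j(t)>0$ the constrained optimiser $\hat{\sigma}$ agrees in the limit with the unconstrained $\sigma^*$, uniformly enough to survive the time-averaging. The difficulty is that coordinates with $q_k(t)=0$ may still have an active truncation, which perturbs the optimiser in \emph{all} coordinates simultaneously; the argument must show this perturbation is asymptotically negligible on the positive coordinates, and it is precisely here that the uniqueness of $\sigma^*$ afforded by strict concavity, and the continuity it buys through the maximum theorem, are essential.
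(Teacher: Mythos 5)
Your overall architecture coincides with the paper's: decompose $\bar{Q}^{(c)}$ into initial condition, arrivals, cumulative service and martingale fluctuations; obtain tightness from Lipschitz and law-of-large-numbers bounds; kill the fluctuation terms with a Doob $L^2$ argument; then identify any limit by showing that the truncated optimizer $\hat{\sigma}(Q^{(c)})$ converges, on coordinates with $q_j>0$, to the untruncated optimizer $\sigma^*(q)$, and integrate using bounded convergence. Your variants are harmless: keeping the realized service (rather than its conditional mean) in the tightness step and invoking the functional SLLN plus Arzel\`a--Ascoli is, if anything, slightly more elementary than the paper's Doob/Azuma--Hoeffding route, and your treatment of the $q_j(t)=0$ case (derivative vanishes at a minimum of a nonnegative function) is correct and is left implicit in the paper.

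The genuine gap is the step you yourself flag as ``the main obstacle.'' The assertion ``the constraint set converges to $\coS$ on them; combining, $\hat{\sigma}_j(Q^{(c)}(\lfloor cu\rfloor))\to\sigma^*_j(q(t))$'' is precisely the paper's Lemma \ref{QZ}, and your proposal does not prove it: your closing paragraph names ingredients (uniqueness, continuity) but gives no argument. Moreover, the tool you name, Berge's maximum theorem, does not apply off the shelf, because the constraint correspondence $Q\mapsto\, <\!\mS\wedge Q\!>$ is \emph{not} continuous along the fluid scaling: a coordinate $k$ with $q_k(t)=0$ can have $Q^{(c)}_k$ of order one (or zero) for all $c$, so the feasible sets $<\!\mS\wedge Q^{(c)}\!>$ stay at Hausdorff distance bounded away from $\coS$; lower hemicontinuity fails in exactly the coordinates that cause the trouble. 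What closes the gap in the paper is not a continuity theorem but a value-comparison sandwich. Set $\kappa_j=M$ (a uniform bound on schedules) when $q_j>0$ and $\kappa_j=0$ when $q_j=0$. Since coordinates with $q_j=0$ carry zero weight in the limiting objective, the optimum of $G_q$ over $<\!\mS\wedge\kappa\!>$ already equals the optimum $G^*$ over $\coS$. Because $Q^{(c)}\geq\kappa$ eventually, the optimal value of the truncated problem (with weights $(\bar{q}^{(c)}_j)^\alpha$, using your scaling-invariance observation) is squeezed between the value over $<\!\mS\wedge\kappa\!>$ and the value over $\coS$, both of which tend to $G^*$ by continuity and compactness. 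Hence any limit point $\hat{\sigma}\in\coS$ of $\hat{\sigma}(Q^{(c)})$ attains $G^*$, so it is optimal for the untruncated problem, and strict concavity of each $g_j$ makes the optimizer unique in every coordinate with $q_j>0$, forcing $\hat{\sigma}_j=\sigma^*_j(q)$ there. Without this argument (or an equivalent one, e.g.\ via epi-convergence of the truncated problems), your identification step --- and with it the proposition --- remains unproven.
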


\noindent The proof --and indeed statement-- of Proposition \ref{FluidLimThrm} is somewhat technical. However, the main point is that we can compare the max-weight queueing process to a tractable fluid model, $q$. Proposition \ref{FluidLimThrm}  is proven in Appendix \ref{FluidLimit}. We now analyse the stability of the fluid model $q$.

\section{Proof of Fluid Stability}\label{sec7}
We now consider a process $q(t)$, $t\in\bR_+$, that satisfies the fluid limit equations \eqref{FluidEqns}. In the following theorem, we show that these fluid solutions are stable in the sense that they hit the zero state in finite time. This result will be sufficient to prove positive recurrence of the MaxWeight-$(\alpha,g)$ queue size process.

 \begin{proposition}[Fluid Stability]\label{FluidStabProof}
There exists a time $T>0$ such that,
for every fluid model $(q(t): t\in\bR_+)$ satisfying \eqref{FluidEqns} and with $||q(0)||_1=1$,
\begin{align}
&\qquad\qquad\qquad q_j(t)=0, && j\in\mJ, 
\end{align}
{ for all } $t\geq T$.
\end{proposition}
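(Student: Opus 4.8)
The plan is to exhibit a Lyapunov function that decreases along every fluid solution at a rate bounded away from zero until the trajectory is absorbed at the origin. By analogy with the MaxWeight-$\alpha$ analysis the natural candidate is a weighted version of $\sum_j q_j^{1+\alpha}$, but the weights must be chosen to absorb the nonlinearity $g$. As a preliminary structural remark I would note that multiplying the objective \eqref{MW-sq-1} by the positive constant $c^\alpha$ leaves its maximizer unchanged, so $\sigma^*(cq)=\sigma^*(q)$ for every $c>0$; hence the fluid velocity $\bar{a}-\sigma^*(q)$ depends only on the direction of $q$. This gives the usual fluid scaling and explains why normalizing to $||q(0)||_1=1$ is natural, although for the proposition as stated only this normalization is needed.

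The step I expect to be the main obstacle is that the Lyapunov drift will be \emph{linear} in $\sigma^*(q)$, whereas the optimality of $\sigma^*(q)$ is expressed through the \emph{nonlinear} quantities $g_j(\sigma_j^*(q))$. The device that resolves this is to linearize optimality into an inequality with constant weights. Fix any $b\in\coS$. Since $\sigma^*(q)$ maximizes $\sum_{j} g_j(s_j)q_j^\alpha$ over $\coS$, we have $\sum_{j} g_j(\sigma_j^*(q))q_j^\alpha\ge\sum_{j} g_j(b_j)q_j^\alpha$; combining this with the concavity bound $g_j(\sigma_j^*)\le g_j(b_j)+g_j'(b_j)(\sigma_j^*-b_j)$, multiplied by $q_j^\alpha\ge 0$ and summed, yields
\[
\sum_{j\in\mJ} g_j'(b_j)\,q_j^\alpha\,\sigma_j^*(q)\;\ge\;\sum_{j\in\mJ} g_j'(b_j)\,q_j^\alpha\, b_j .
\]
The decisive gain is that the weights $w_j:=g_j'(b_j)$ are positive constants, independent of $q$ (strict monotonicity together with strict concavity of $g_j$ forces $g_j'>0$).

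With these weights I would take $L(q)=\sum_{j\in\mJ} w_j\, q_j^{1+\alpha}/(1+\alpha)$, which is $C^1$ on $\bR_+^\mJ$ since $\alpha>0$. Differentiating along the fluid solution and using \eqref{FluidEqns} (terms with $q_j=0$ vanish, since then both $q_j^\alpha=0$ and $\dot q_j=0$) gives $\dot L=\sum_{j} w_j q_j^\alpha(\bar{a}_j-\sigma_j^*(q))$. Choosing $b=(1+\epsilon)\bar{a}$, which lies in $\coS$ for small $\epsilon>0$ because $\bar{a}\in\mathrm{int}(\coS)$, and invoking the displayed inequality gives $\dot L\le-\epsilon\sum_{j} w_j\bar{a}_j q_j^\alpha\le-\kappa_0\sum_{j}q_j^\alpha$ for some constant $\kappa_0>0$. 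A routine norm comparison on the finite set $\mJ$ bounds $\sum_{j}q_j^\alpha$ below by a constant multiple of $L(q)^{\alpha/(1+\alpha)}$, producing $\dot L\le-\kappa L^{\beta}$ with $\beta=\alpha/(1+\alpha)\in(0,1)$. Because $\beta<1$, $L$ reaches $0$ in finite time, after which the fluid equations force $q\equiv 0$. Since $L(q(0))$ is bounded uniformly on $\{||q(0)||_1=1\}$ (indeed $\sum_{j}q_j(0)^{1+\alpha}\le ||q(0)||_1^{1+\alpha}=1$), the absorption time $T$ can be taken independent of the trajectory, as required. Everything after the linearization inequality is standard; that inequality is the crux.
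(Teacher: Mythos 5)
Your proposal is correct and follows essentially the same route as the paper: your linearization inequality $\sum_j g_j'(b_j)q_j^\alpha(\sigma_j^*(q)-b_j)\ge 0$ is precisely the paper's gradient condition \eqref{rhocond} (derived there from strict concavity along the segment from $\rho=(1+\epsilon)\bar a$ to the maximizer, here from optimality plus the first-order concavity bound), and your Lyapunov function $L(q)=\sum_j g_j'(b_j)q_j^{1+\alpha}/(1+\alpha)$, the drift bound $\dot L\le -\kappa L^{\alpha/(1+\alpha)}$ via norm comparison, and the finite-time absorption argument coincide with the paper's \eqref{MW-Lya}--\eqref{Lineq}. If anything, your version is slightly more careful in retaining the factor $\bar a_j$ in the drift bound, which the paper silently absorbs.
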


The main idea is to consider the gradient of the tangent line of the MaxWeight-$(\alpha,g)$ objective, \eqref{MW-sq-1}, between two points: the arrival rate and the optimal solution, see \eqref{rhocond} below. Integrating this obtains a Lyapunov function \eqref{MW-Lya}. This idea is used by Bonald and Massouli\'e \cite{BoMa01} in their analysis of weighted $\alpha$-fair bandwidth sharing networks. For switch networks, Proposition \ref{FluidStabProof} follows analogously.

\begin{proof}
We define $G_q(s)$ to be the objective of the MaxWeight-$(\alpha,g)$ optimization,
\begin{equation}
G_q(s)=\sum_{j\in\mJ} g_j(s_j) q_j^\alpha.
\end{equation}•
Recall that in our fluid equations
\begin{equation*}
{\sigma}^*(q(t))\in\argmax_{s\in\coS}\; G_{q(t)}(s).
\end{equation*}
Any vector $\rho$ belonging to the interior of $\coS$ is not optimal. Thus  $G_{q(t)}(\rho)< G_{q(t)}({\sigma}^*(q(t)))$. As $G_{q(t)}(\cdot)$ is strictly concave,  $G_{q(t)}(\cdot)$ must be increasing along the line connecting $\rho$ to ${\sigma}^*(q(t))$. In other words, for all $\rho\in\coS^\circ$ and for  $q(t)\neq 0$,
\begin{equation}\label{rhocond}
\Big({\sigma}^*(q(t))-\rho\Big) \cdot \nabla G_{q(t)}(\rho) > 0.
\end{equation}
Here $\nabla G_{q}(\rho)= ( g'_j(\rho_j) q_j(t)^\alpha : j\in\mJ)$.  
Since $\bar{a}$ belongs to the interior of $\coS$, there exists $\epsilon>0$ such that $ (1+\epsilon) \bar{a}\in \coS$. We define $\rho=(1+\epsilon)\bar{a}$. In this case, we can re-express the inequality \eqref{rhocond} as follows
\begin{equation}\label{ineq}
\sum_{j\in\mJ} \Big(\bar{a}_j-{\sigma}_j^*(q(t)) \Big) g_j'(\rho_j) q_j(t)^\alpha \leq -\epsilon \sum_{j\in\mJ} g_j'(\rho_j) q_j(t)^\alpha .
\end{equation}
As $\frac{d q_j}{dt}=\bar{a}_j-{\sigma}_j^*(q(t))$, we define the Lyapunov function
\begin{equation}\label{MW-Lya}
L(q)= \sum_{j\in\mJ} g'(\rho_j) \frac{q_j^{1+\alpha}}{1+\alpha},
\end{equation}
$q\in\bR_+^\mJ$. The function $L(q)$ is positive and $L(q)=0$ iff $q_j=0$ for all $j\in\mJ$. We now observe
\begin{align}
\frac{dL(q(t))}{dt}&=\sum_{j\in\mJ} \Big(\bar{a}_j-{\sigma}_j^*(q(t)) \Big)g_j'(\rho_j) q_j(t)^\alpha \notag\\
& \leq  -\epsilon \sum_{j\in\mJ} g_j'(\rho_j) q_j(t)^\alpha. \label{diffinequ}
\end{align}
The equality holds by the chain rule and the inequality holds by \eqref{ineq}.

We define norms on $\bR_+^\mJ$
\begin{align}
& ||q||_{1+\alpha} = ( L(q) )^{\frac{1}{1+\alpha}},\\
& || q||_{\alpha} = \Big( \sum_{j\in\mJ} g_j'(\rho_j) q_j(t)^\alpha\Big)^{\frac{1}{\alpha}}.
\end{align}
By the Lipschitz equivalence of norms, there is a constant $\gamma>0$ such that $$ \gamma ||q||_{1+\alpha} \leq ||q||_\alpha,$$ for all $q\in\bR_+^\mJ$.\footnote{ Note,  $||q||_{1+\alpha} \leq (1+\alpha)^{-\frac{1}{1+\alpha}}|\mJ| \max_j g'_j(\rho_j) q_j $ and also note that  $\max_j g'_j(\rho_j) q_j\leq ||q||_{\alpha}$. So, for instance, we can take $\gamma=(1+\alpha)^{\frac{1}{1+\alpha}} | \mJ|^{-1}$.} Applying this observation to the inequality \eqref{diffinequ}, we see that
\begin{equation}\label{Lineq}
\frac{dL(q(t))}{dt} \leq -\epsilon \gamma^{\alpha}  L(q(t))^{\frac{\alpha}{1+\alpha}}.
\end{equation}
Observe, by the above inequality, if $L(q(T))=0$ for any differentiable point $T$ then  $L(q(t))=0$ for all $t\geq T$. Now, whilst $L(q(t))>0$, we have from \eqref{Lineq} that
\begin{align*}
&L(q(t))^\frac{1}{1+\alpha}-L(q(0))^\frac{1}{1+\alpha} \\
=& \int_0^t (1+\alpha)^{-1} L(q(t))^{\frac{-\alpha}{1+\alpha}}\frac{dL(q(t))}{dt} dt\\
\leq &  -\epsilon (1+\alpha)^{-1} \gamma^{\alpha} t
\end{align*}
Rearranging this expression, we see that for all times $t$
\begin{equation}
L(q(t))\leq  \left( L(q(0))^\frac{1}{1+\alpha}-\epsilon (1+\alpha)^{-1} \gamma^{\alpha} t \right)_+^{1+\alpha}.
\end{equation}
The function $L(q)$ is continuous and therefore bounded above by a constant, $K$, for all values of $q$ with $||q||_1=1$. Hence, if $||q(0)||_1=1$, $L(q(t))=0$ for all $t\geq T$ where 
\begin{equation*}
T=\frac{ (1+\alpha) K^{\frac{1}{1+\alpha}}}{\epsilon\gamma^{\alpha}},
\end{equation*}
and thus, as required, $q_j(t)=0$, $j\in\mJ$, for all $t\geq T$.
\end{proof}

\section{Proof of Positive Recurrence}\label{sec8}
We are now in a position to combine Propositions \ref{FluidLimThrm} and \ref{FluidStabProof} to prove Theorem \ref{MainThrm}. We could at this point apply the general stability results of Dai \cite{Da95,Da95b} and Bramson \cite{Br08}. However, for completeness we provide a self-contained proof.

\begin{proof}[Proof of Theorem \ref{MainThrm}]
For every $t\geq 0$, the sequence queue sizes $\{ \bar{Q}^{(c)}(t)\}_{c\in\bN}$ is uniformly integrable. This is proven in Lemma \ref{UIlemma} in Appendix \ref{Lemmas}. By Proposition \ref{FluidLimThrm}, for any unbounded sequence in $\bN$, there is a subsequence $\{c_k\}_{k\in\bN}$ for which $\bar{Q}^{(c_k)}$ converges in distribution to fluid solution ${q}$. Let $T$ be the time given in Proposition \ref{FluidStabProof}, where $q_j(T)=0$ for $j\in\mJ$. 
Since $\{|\bar{Q}^{(c_k)}(T)|\}_{c_k}$ is uniformly integrable and converges in distribution to ${q}(T)$, we also have $L_1$ convergence
\begin{equation}\label{fosters:fl1}
\lim_{c_k\rightarrow\infty} \bE ||\bar{Q}^{(c_k)}(T)||_1 = \bE ||q(T)||_1=0.
\end{equation}
This implies there exists a $\kappa$ such that for all $c> \kappa$
\begin{equation}\label{fosters:fl2}
\bE ||\bar{Q}^{(c)}(T)||_1  < (1-\epsilon).
\end{equation}
Note that if \eqref{fosters:fl2} did not hold then we could find a subsequence for which \eqref{fosters:fl1} did not hold; thus, we would have a contradiction. Expanding this inequality \eqref{fosters:fl2}, we have as described by Bramson \cite{Br08}, the following \emph{multiplicative Foster's condition}: for $\big|\big|{Q}(0)\big|\big|_1 > \kappa$
\begin{equation}\label{FosterCond}
\bE\left[ \big|\big|{Q}(T||{Q}(0)||_1)\big|\big|_1-\big|\big|{Q}(0)\big|\big|_1 \Big|  {Q}(0) \right] <  -\epsilon \big|\big|{Q}(0)\big|\big|_1 .
\end{equation}

We now use this to prove positive recurrence of the event $\left\{\big|\big|{Q}(t)\big|\big|_1 \leq \kappa\right\}$.
We consider our Markov chain, $Q$, at specific stopping times: $\tau_0:=0$ and for $n\in\bN$,
\begin{align*}
\tau_n &:=\tau_{n-1} + T||Q(\tau_{n-1})||_1,  &&\text{if } ||Q(\tau_{n-1})||_1>\kappa,\\
\tau_n &:=\tau_{n-1} + 1, &&\text{if }  ||Q(\tau_{n-1})||_1\leq \kappa,
\end{align*}
We define the stopping time $N=\min\{ n :\tau_n\geq t\}$.
Now,
\begin{align}
0 \leq & \bE ||Q(\tau_N )||_1  \notag\\
=& \bE ||Q(0)||_1 + \bE\left[ \sum_{n=1}^N  ||Q(\tau_n)||_1 -||Q(\tau_{n-1})||_1\right]\notag \\
%
%
\leq &  \bE ||Q(0)||_1 -\epsilon \bE\Big[ \sum_{n=1}^N  ||Q(\tau_{n-1})||_1 \bI\big[||Q(\tau_{n-1})||_1 > \kappa \big] \Big]\notag\\
&+ ||\bar{a}||_1 \bE\Big[ \sum_{n=1}^N \bI\big[||Q(\tau_{n-1})||_1 \leq \kappa \big] \Big] \label{Ineq1}\\
= &  \bE ||Q(0)||_1 -\frac{\epsilon}{T} \bE\Big[ \sum_{n=1}^N  \big(\tau_n-\tau_{n-1}\big) \bI\big[||Q(\tau_{n-1})||_1 > \kappa \big] \Big]\notag\\
&+ ||\bar{a}||_1 \bE\Big[ \sum_{n=1}^N \bI\big[||Q(\tau_{n-1})||_1 \leq \kappa \big] \Big]\label{Ineq2}\\
= &  \bE ||Q(0)||_1 -\frac{\epsilon}{T} \bE\big[ \tau_N \big]\notag\\
&+ \left( ||\bar{a}||_1+\frac{\epsilon}{T} \right)\bE\Big[ \sum_{n=1}^N \bI\big[||Q(\tau_{n-1})||_1 \leq \kappa \big] \Big]\label{Ineq3}\\
\leq &  \bE ||Q(0)||_1 -\frac{\epsilon}{T} t\notag\\
&+ \left( ||\bar{a}||_1+\frac{\epsilon}{T} \right)\bE\Big[ \sum_{s=1}^t \bI\big[||Q(s)||_1 \leq \kappa \big] \Big] \label{Ineq4}
\end{align}
In inequality \eqref{Ineq1},  we condition on the event  $\{ ||Q(\tau_{n-1})||_1 > \kappa \}$ and apply the multiplicative Foster's condition \eqref{FosterCond}; for equality \eqref{Ineq2}, we observe that, by definition, $\tau_n -\tau_{n-1} = T||Q(\tau_{n-1})||_1$ and then adding appropriate terms we get an interpolating sum for equation \eqref{Ineq3}; and finally, for \eqref{Ineq4}, we observe the sequence $||Q(\tau_n)||_1$, for $\tau_n<t$, must have hit below $\kappa$ less times than the sequence $||Q(s)||_1$, $s\leq t$.

Rearranging the positive expression \eqref{Ineq4}, dividing by $t$ and taking limits, we gain
\begin{equation*}
\liminf_{t\rightarrow \infty} \frac{1}{t} \bE\Big[ \sum_{s=1}^t \bI\big[||Q(s)||_1 \leq \kappa \big] \Big] \geq \frac{\epsilon }{T||a||_1+\epsilon}>0.
\end{equation*}
Thus, $Q$ is positive recurrent because it is positive recurrent in the finite set of states $\{q\in\bZ_+^\mJ: ||q||_1\leq \kappa\}$. 
\end{proof}

\section{Conclusion}\label{sec9}
We have now shown that the MaxWeight-($\alpha,g$) policy is throughput optimal. There are many other questions which could be asked for these policies. 

The first of which was the conjecture of Zhong \cite{Zh12} on the queue size scaling of the proportionally fair scheduling policy. A proof of this optimal behaviour further necessitates extensions beyond MaxWeight. More immediately, one could prove throughput optimality of MaxWeight-$(f,g)$, in a similar manner to that considered by Meyn \cite{Me09}. The analogous BackPressure policy could be considered in a similar manner to \cite{ShWi11}. Given that arrival rates may vary over time, one could further prove universal stability as considered by Neely \cite{Ne10}. A number of regimes beyond the fluid limit in this paper could be considered: heavy traffic \cite{St04}, overload \cite{ShWi11}, large deviations \cite{Su10,Su11}. Further, we do not analyse the computational complexity resolving scheduling solutions. Efficient solution of the MaxWeight-$(\alpha,g)$ optimization is of course important to any practical implementation \cite{Ta98,ShTsTs11}. We do not consider the effect of changes in network topology on stability \cite{VBY11}. Further, we do not consider decentralized implementation of this policy which has been a topic of recent investigation \cite{ShSh12,JiWa10}. 

Although there are many avenues that may be further pursued, we have first affirmatively answers this fundamental stability question for the MaxWeight-($\alpha,g$) polices.

\appendix
\section{Fluid Limit}\label{FluidLimit}

\begin{proof}[Proof of Proposition \ref{FluidLimThrm}]
To prove the tightness of a sequence of process $\{ X^{(c)} \}_c$, from Robert \cite[Theorem C.9]{Ro10}, 
 we see that we must prove
\begin{equation}\label{tightness}
\lim_{\delta\rightarrow 0} \bP\Bigg( \sup_{\substack{u,v: u,v<t\\|v-u|<\delta}}  \Big|\Big|X^{(c)}(v)-X^{(c)}(u) \Big|\Big|_1\geq \eta\Bigg)  =0.
\end{equation}
We wish to demonstrate this for $\{ \bar{Q}^{(c)} \}_c$.
For $j\in\mJ$,  we can express $\bar{Q}^{(c)}_j(t)$ as follows
\begin{align}
\bar{Q}^{(c)}_j(t) &=  \bar{Q}^{(c)}_j(0) + \bar{a}_j\frac{\lfloor ct \rfloor }{c} - \bar{S}^{(c)}_j(t) + \bar{M}^{(c)}_j(t) + \bar{N}^{(c)}_j(t) 
\end{align}
where, from $a(t),\bar{a},\sigma(t),\bar{\sigma}(t)$, we define
\begin{align*}
\bar{S}^{(c)}_j(t) &= \frac{1}{c}\sum_{s=1}^{\lfloor ct\rfloor} \bar{\sigma}_j(s),\\
&= \int_0^{{{\lfloor ct\rfloor} }/{c}} \hat{\sigma}_j({Q}^{(c)}_j(cs)) ds ,\\
  \bar{M}^{(c)}_j(t) &=\frac{1}{c}\sum_{s=1}^{\lfloor ct\rfloor}  \Big( a_j(s)-\bar{a}_j \Big),\\
\bar{N}^{(c)}_j(t) &= \frac{1}{c}\sum_{s=1}^{\lfloor ct\rfloor}  \Big(  \bar{\sigma}_j(s) - {\sigma}_j(s) \Big),
\end{align*}
for $j\in\mJ$. 

It is clear $Q_j(0) + \bar{a}_j{\lfloor ct\rfloor} /c$ satisfies \eqref{tightness}.  By the triangle inequality, it is sufficient to show \eqref{tightness} holds for each of the above terms. 

The term $\bar{\sigma}(s)$ is bounded by some constant $K_1$ for all $s$. So, we have the Lipschitz condition $$|\bar{S}_j(t)-\bar{S}_j(s)| < K_1|t-s| + 2K_1c^{-1}$$ which satisfies \eqref{tightness} with $\delta < \eta K_1^{-1}$. 

The process $\bar{M}_j(t)$ is a martingale thus by Doob's $L_2$ inequality
\begin{align}\label{tightM}
&\bP\Bigg( \sup_{\substack{u,v: u,v<t\\|v-u|<\delta}}  \Big| \bar{M}_j^{(c)}(v)-\bar{M}_j^{(c)}(u) \Big|\geq \eta\Bigg) \\
\leq &\bP \left( \sup_{u: u <t} \big| \bar{M}_j^{(c)}(u)\big| \geq \frac{\eta}{2}\right)\\
\leq & \frac{4}{\eta^2} \bE \left[\bar{M}_j^{(c)}(t)^2 \right] = \frac{4t}{\eta^2} \left[ \frac{1}{c}\bE (a_j(1)-\bar{a}_j)^2\right]\xrightarrow[c\rightarrow\infty]{} 0.
\end{align}

The process $\bar{N}^{(c)}_j(t)$ is a Martingale with summands bounded by some constant $K_2/c$, so by the Azuma-Hoeffding Inequality
\begin{equation}
\bP (\big| \bar{N}^{(c)}_j(t) \big| \geq \eta ) \leq 2 e^{- \frac{c\eta^2}{2 t K_2} }
\end{equation}
Again applying Doob's $L_2$ inequality
\begin{align}\label{tightN}
&\bP\Bigg( \sup_{\substack{u,v: u,v<t\\|v-u|<\delta}}  \Big| \bar{N}_j^{(c)}(v)-\bar{N}_j^{(c)}(u) \Big|\geq \eta\Bigg) \\
\leq &\bP \left( \sup_{u: u <t}\big|  \bar{N}_j^{(c)}(u) \big| \geq \frac{\eta}{2}\right)\\
\leq & \frac{4}{\eta^2} \bE \left[\bar{N}_j^{(c)}(t)^2 \right] \leq \left( \frac{4 K_2t}{\eta c}\right)^2\xrightarrow[c\rightarrow \infty]{}0.
\end{align}
Thus our sequence of processes $\{\bar{Q}^{(c)}\}_{c}$ are tight.

Note the inequalities above, \eqref{tightM} and \eqref{tightN}, also prove that the sequences $\bar{N}^{(c)}$ and $\bar{M}^{(c)}$ converge in distribution to zero.

It remains to show that any fluid limit process satisfies the fluid equations \eqref{FluidEqns}. By the Skorohod Representation Theorem \cite[Theorem C.8]{Ro10}] and since $\{\bar{Q}^{(c)}\}_{c}$ is tight: for any convergent subsequence of we can choose a subsequence where convergence occurs almost surely, along an appropriately chosen probability space. Thus, using  this and the definition of $\bar{S}_j$
\begin{align}
&\lim_{c\rightarrow\infty} \bar{Q}^{(c)}_j(t) \\
=&  \lim_{c\rightarrow\infty} \Big\{ \bar{Q}^{(c)}_j(0) + \bar{a}_j\frac{ \lfloor ct \rfloor}{c} - \bar{S}^{(c)}_j(t) + \bar{M}^{(c)}_j(t) + \bar{N}^{(c)}_j(t) \Big\}\\
=& q_j(0) + \bar{a}_jt - \lim_{c\rightarrow\infty} \bar{S}^{(c)}_j(t) \\
=& q_j(0) + \bar{a}_jt - \lim_{c\rightarrow\infty} \int_0^{{{\lfloor ct\rfloor} }/{c}}  \hat{\sigma}_j({Q}^{(c)}_j(cs))ds
=q_j(t).
\end{align}
As $\sigma^*$ is bounded it is clear that $q_j(t)$ is Lipschitz continuous. If $q_j(t)>0$ then, by continuity, $q_j(s)>0$ for an open region $s\in (t-\delta,t+\delta)$. 
We have for this choice of $t$ and $s$
\begin{align}
&q_j(t)-q_j(s) \\
=&\bar{a}_j (t -s) - \lim_{c\rightarrow\infty} \int_{{{\lfloor cs\rfloor} }/{c}}^{{{\lfloor ct\rfloor} }/{c}}  \hat{\sigma}_j({Q}^{(c)}_j(cu) du\\
  =& \bar{a}_j (t -s) -  \int_{{{\lfloor cs\rfloor} }/{c}}^{{{\lfloor ct\rfloor} }/{c}} \lim_{c\rightarrow\infty} \hat{\sigma}_j({Q}^{(c)}_j(cu) du\\
=& \bar{a}_j (t -s) -  \int_{{{\lfloor cs\rfloor} }/{c}}^{{{\lfloor ct\rfloor} }/{c}}  \sigma^*_j(q_j(u)) du
\end{align}
As Lemma \ref{QZ} states $\hat{\sigma}_j(q_j)$ is continuous for $q_j>0$, we apply the bounded convergence theorem in the second equality above. Now, in addition to being tight, we have proven the fluid equations, \eqref{FluidEqns}, hold.

\end{proof}

\section{Other Lemmas}\label{Lemmas}

The following lemma assists our fluid limit proof, Proposition \ref{FluidLimThrm}. Recall our definitions of $\hat{\sigma}(q)$, \eqref{MW-qq}, and $\sigma^*(q)$, \eqref{MW-sq}. We let $\{q^{(c)}\}_c$ be any unbounded sequence in $\bZ_+^\mJ$ with $||q^{(c)}||_1=c$. From this, we define $\bar{q}^{(c)}= q^{(c)}/c$.
\begin{lemma}\label{QZ}
If $\bar{q}^{(c)}\rightarrow q$  as $c\rightarrow \infty$ then, for each $j\in\mJ$ with $q_j> 0$,
\begin{equation*}
\hat{\sigma}_j({q}^{(c)}) \rightarrow \sigma^*_j(q)
\end{equation*}
 as $c\rightarrow \infty$. 
\end{lemma}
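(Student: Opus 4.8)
The plan is to exploit the scaling built into the two optimisations. Write $\mJ_+=\{j\in\mJ: q_j>0\}$ and $\mJ_0=\{j\in\mJ: q_j=0\}$, and let $\pi_+:\bR^\mJ\to\bR^{\mJ_+}$ denote the projection onto the $\mJ_+$-coordinates. Since $\mS$ is finite, set $M=\max_{\sigma\in\mS,\,j\in\mJ}\sigma_j<\infty$; every feasible $s$ in either problem then satisfies $0\le s_j\le M$. Because $(q_j^{(c)})^\alpha=c^\alpha(\bar q_j^{(c)})^\alpha$, dividing the MaxWeight-$(\alpha,g)$ objective by $c^\alpha$ changes neither maximiser, so I would work throughout with the rescaled objective $\tilde G^{(c)}(s)=\sum_{j\in\mJ} g_j(s_j)(\bar q_j^{(c)})^\alpha$ in place of $\sum_j g_j(s_j)(q_j^{(c)})^\alpha$. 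The guiding idea is that, as $c\to\infty$, the weights on $\mJ_0$ vanish while those on $\mJ_+$ stabilise to $q_j^\alpha>0$, so only the $\mJ_+$-coordinates survive in the limit, and there the problem is strictly concave with a unique maximiser.

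I would first record two facts. (i) For $j\in\mJ_+$ we have $q_j^{(c)}=c\bar q_j^{(c)}\to\infty$, so once $c$ is large enough that $q_j^{(c)}>M$ for all $j\in\mJ_+$, the clipping is inactive on $\mJ_+$: $(\sigma\wedge q^{(c)})_j=\sigma_j$ there for every $\sigma\in\mS$. Hence $\pi_+(\mS\wedge q^{(c)})=\pi_+(\mS)$, and since projection commutes with taking convex hulls, $\pi_+(<\!\mS\wedge q^{(c)}\!>)=\pi_+(\coS)$ for all large $c$. (ii) Since each $g_j$ is continuous, hence bounded on $[0,M]$, and $(\bar q_j^{(c)})^\alpha\to q_j^\alpha$ for $j\in\mJ_+$ while $(\bar q_j^{(c)})^\alpha\to0$ for $j\in\mJ_0$, the rescaled objectives converge uniformly over the feasible box $[0,M]^\mJ$:
\[
\tilde G^{(c)}(s)\longrightarrow G_q(s):=\sum_{j\in\mJ_+}g_j(s_j)\,q_j^\alpha ,
\]
a function depending only on $\pi_+(s)$ and strictly concave in those coordinates.

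Next I would prove that the optimal values converge, $\tilde G^{(c)}(\hat\sigma(q^{(c)}))\to\max_{s\in\coS}G_q(s)=:V^*$. For the lower bound, take $\sigma^*(q)$ and, using fact (i), choose a feasible $s^{(c)}\in<\!\mS\wedge q^{(c)}\!>$ with $\pi_+(s^{(c)})=\pi_+(\sigma^*(q))$; then $\tilde G^{(c)}(s^{(c)})\to V^*$ because the $\mJ_+$-part converges to $V^*$ and the $\mJ_0$-part vanishes, giving $\liminf_c \tilde G^{(c)}(\hat\sigma(q^{(c)}))\ge V^*$. For the upper bound, fact (i) gives $\pi_+(\hat\sigma(q^{(c)}))\in\pi_+(\coS)$, so $G_q(\hat\sigma(q^{(c)}))\le V^*$, and uniform convergence of the objectives together with the vanishing $\mJ_0$-part yield $\tilde G^{(c)}(\hat\sigma(q^{(c)}))\le V^*+o(1)$. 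With value convergence in hand, I would finish by a compactness-and-uniqueness argument: the points $x^{(c)}:=\pi_+(\hat\sigma(q^{(c)}))$ lie in the compact set $\pi_+(\coS)$, and any subsequential limit $\tilde x$ satisfies $G_q(\tilde x)=V^*$ by uniform convergence; strict concavity of $G_q$ on $\pi_+(\coS)$ forces $\tilde x=\pi_+(\sigma^*(q))$, the unique maximiser, so the whole sequence converges, which is exactly $\hat\sigma_j(q^{(c)})\to\sigma^*_j(q)$ for $j\in\mJ_+$.

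The main obstacle is that both the constraint set $<\!\mS\wedge q^{(c)}\!>$ and the objective weights depend on $c$, and on $\mJ_0$ the clipping never becomes inactive. The device that resolves this is the combination of rescaling by $c^\alpha$ and projecting onto $\mJ_+$: rescaling makes the weights on $\mJ_+$ converge to strictly positive limits (restoring strict concavity and uniqueness of the limiting maximiser) while those on $\mJ_0$ die, and the projection discards precisely the coordinates where clipping remains active and where, accordingly, the lemma makes no claim. The only points needing genuine care are the uniformity of the objective convergence (handled by boundedness of $g_j$ on $[0,M]$) and the identity $\pi_+(<\!\mS\wedge q^{(c)}\!>)=\pi_+(\coS)$, for which finiteness of $\mS$ is essential.
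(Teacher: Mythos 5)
Your proof is correct, and its skeleton matches the paper's: both arguments rest on the facts that the weights $(\bar{q}_j^{(c)})^\alpha$ vanish on the coordinates with $q_j=0$, that the truncation $\sigma\wedge q^{(c)}$ is eventually inactive on coordinates with $q_j>0$, and that strict concavity makes the limiting maximizer unique in those coordinates; both then finish with a compactness/subsequence argument. The genuine difference lies in how the $c$-dependent feasible set $<\!\mS\wedge q^{(c)}\!>$ is related to $\coS$. The paper introduces the capped vector $\kappa$ (with $\kappa_j=M$ if $q_j>0$ and $\kappa_j=0$ otherwise), notes $q^{(c)}\geq\kappa$ eventually, and uses monotonicity of the $g_j$ to sandwich the optimal values of the problems over $<\!\mS\wedge\kappa\!>$, $\coS$, and $<\!\mS\wedge q^{(c)}\!>$; you instead project onto the positive coordinates via $\pi_+$ and use the identity $\pi_+(<\!\mS\wedge q^{(c)}\!>)=\pi_+(\coS)$ for all large $c$, valid because projection is linear and hence commutes with convex hulls. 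Your device buys a little extra rigor at one point: the paper's opening inequality $\sum_j g_j(\hat\sigma_j)q_j^\alpha\leq\sum_j g_j(\sigma^*_j(q))q_j^\alpha$, asserted ``by definition,'' in fact requires knowing that the positive coordinates of any point of $<\!\mS\wedge q^{(c)}\!>$ (and of its limit) are attainable within $\coS$ --- which is not immediate, since $\coS$ is not assumed downward closed --- and this is precisely what your projection identity supplies. Beyond that, the two proofs perform the same estimates in a different order: you establish convergence of the optimal values first and then extract subsequential limits of the maximizers, whereas the paper fixes a convergent subsequence of $\hat\sigma(q^{(c)})$ at the outset and shows its limit attains $G^*$.
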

\begin{proof}
We can take an appropriate subsequence and without loss of generality, we may assume that the bounded sequence $\{ \hat{\sigma}(q^{(c)}) \}_{c}$ converges to some value $\hat{\sigma}$. In addition, we take $M$ such that $M\geq \sigma_j$ for all $\sigma\in\mS$ and $j\in\mJ$ and define $\kappa\in\bR^\mJ$ by
\begin{equation*}
\kappa_j=
\begin{cases}
M, & \text{if } q_j > 0,\\
0, & \text{if } q_j=0.
\end{cases}•
\end{equation*}•
Notice, since $\bar{q}^{(c)}\rightarrow q$, 
\begin{equation}\label{mon-q}
{q}^{(c)}\geq \kappa
\end{equation}
\text{ eventually as }$c\rightarrow\infty$.

By definition
\begin{equation}
\sum_{j\in\mJ} g_j(\hat{\sigma}_j) q_j^{\alpha} \leq  \sum_{j\in\mJ} g_j({\sigma}^*_j(q)) q_j^{\alpha}. 
\end{equation}

By strict concavity: $g_j(ps^1_j + (1-p)s^2_j)q_j >p g_j(s^1_j)q_j + (1-p)g_j(s^2_j)q_j$, the optimal solution $\sigma^*(q)$ is unique for each component $j$ with $q_j>0$. We define $G^*$ by
\begin{equation}
G^*=  \max_{s\in\coS}  \sum_{j\in\mJ} g_j(s_j) q_j^{\alpha}
\end{equation}•
By definition $\sigma^*(q)$ is optimal for the above optimization. Also, notice the above optimum remains equal to $G^*$ if we maximize over $<\mS\wedge \kappa>$ instead of $\coS$. Thus
\begin{align}
G^* &= \max_{\sigma\in <\mS\wedge \kappa>} \sum_{j\in\mJ} g_j({\sigma}) q_j^{\alpha}\\
& = \lim_{c\rightarrow\infty} \max_{\sigma\in <\mS\wedge \kappa>} \sum_{j\in\mJ} g_j({\sigma}_j) \big( \bar{q}_j^{(c)} \big)^{\alpha}\\
 & \leq  \lim_{c\rightarrow\infty} \max_{\sigma\in <\mS\wedge q^{(c)} >} \sum_{j\in\mJ} g_j({\sigma}_j) \big( \bar{q}_j^{(c)}  \big)^{\alpha}\\
& = \lim_{c\rightarrow\infty}  \sum_{j\in\mJ} g_j\big(\hat{\sigma}_j(q^{(c)} ) \big) \big( \bar{q}_j^{(c)} \big)^{\alpha}\\
& = \sum_{j\in\mJ} g_j\big(\hat{\sigma}_j\big) q_j^{\alpha}
\end{align}
The first equality, above, holds optimum remains equal to $G^*$ if we maximize over $<\mS\wedge \kappa>$ instead of $\coS$; the second equality holds by continuity of the objective function and compactness of $<\!\mS\wedge \kappa\!>$; the next inequality holds by monotonicity \eqref{mon-q}; and the remaining equalities hold by definition and continuity.

Thus $\hat{\sigma}$ is optimal and thus $\hat{\sigma}_j=\sigma^*_j(q)$ for all $j$ with $q_j>0$. So, as this limit $\hat{\sigma}_j$ is unique for all subsequences the lemma must hold.
\end{proof}

\begin{lemma}\label{UIlemma}
For every $t$, the random variables $\{ \bar{Q}^{(c)}(t)\}_{c\in\bN}$ are uniformly integrable.
\end{lemma}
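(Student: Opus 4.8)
The plan is to prove something stronger and more convenient, namely that the family is bounded in $L_2$: I will show $\sup_{c\in\bN}\bE\big[\|\bar{Q}^{(c)}(t)\|_1^2\big]<\infty$ for each fixed $t$. This suffices for uniform integrability by the standard Markov-type estimate: for any threshold $M>0$,
\begin{equation*}
\bE\Big[\|\bar{Q}^{(c)}(t)\|_1\,\bI\big[\|\bar{Q}^{(c)}(t)\|_1\geq M\big]\Big]\leq \frac{1}{M}\,\bE\big[\|\bar{Q}^{(c)}(t)\|_1^2\big]\leq \frac{1}{M}\sup_{c}\bE\big[\|\bar{Q}^{(c)}(t)\|_1^2\big],
\end{equation*}
and the right-hand side tends to $0$ as $M\to\infty$ uniformly in $c$.

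The first step is a pathwise domination. Since each schedule serves a nonnegative number of jobs, $\sigma_j(s)\geq 0$, summing the recursion $Q_j(s)=Q_j(s-1)-\sigma_j(s)+a_j(s)$ gives $Q_j(t)\leq Q_j(0)+\sum_{s=1}^{t}a_j(s)$ for every $j\in\mJ$. Applying this to the $c$-th system at time $\lfloor ct\rfloor$, dividing by $c$, and using $\|Q^{(c)}(0)\|_1=c$ so that $\sum_{j}Q^{(c)}_j(0)/c=1$, I obtain
\begin{equation*}
\|\bar{Q}^{(c)}(t)\|_1\;\leq\; 1+\frac{1}{c}\sum_{s=1}^{\lfloor ct\rfloor}\|a(s)\|_1.
\end{equation*}
The second step is a routine moment computation. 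Writing $A_s=\|a(s)\|_1$, the $\{A_s\}$ are i.i.d.\ with mean $\|\bar{a}\|_1$ and, by Cauchy--Schwarz together with the finite-variance hypothesis $\bE[a_j(s)^2]\leq K$, second moment $\bE[A_s^2]\leq |\mJ|\sum_j\bE[a_j(s)^2]\leq |\mJ|^2 K=:K'$. Expanding the square and using independence across distinct indices gives, with $n=\lfloor ct\rfloor\leq ct$,
\begin{equation*}
\bE\Big[\Big(\tfrac{1}{c}\sum_{s=1}^{n}A_s\Big)^2\Big]=\frac{1}{c^2}\Big(n\,\bE[A_1^2]+n(n-1)\|\bar{a}\|_1^2\Big)\leq \frac{tK'}{c}+t^2\|\bar{a}\|_1^2.
\end{equation*}
Combining with the domination via $(x+y)^2\leq 2x^2+2y^2$ yields $\bE\big[\|\bar{Q}^{(c)}(t)\|_1^2\big]\leq 2+2\big(tK'+t^2\|\bar{a}\|_1^2\big)$, a bound independent of $c$ for $c\geq 1$.

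There is no serious obstacle here; the argument is essentially a second-moment estimate. The only points requiring care are the pathwise domination (which relies solely on nonnegativity of the service process, so the detailed form of the MaxWeight-$(\alpha,g)$ schedule is irrelevant) and keeping track of the scaling so that the $1/c^2$ prefactor absorbs the $O(n)$ and $O(n^2)$ contributions without blowing up. The finite-variance assumption on the arrivals, flagged as convenient rather than essential in Section \ref{sec4}, is exactly what makes the $L_2$ bound available and hence gives uniform integrability immediately.
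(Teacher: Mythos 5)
Your proof is correct and follows essentially the same route as the paper: bound the scaled queue pathwise by $1$ plus the scaled sum of arrivals, then use the finite-variance assumption to obtain an $L_2$ bound, which yields uniform integrability. The only difference is that you write out explicitly the second-moment computation and the $L_2$-bound-implies-UI step that the paper leaves as assertions.
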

\begin{proof}
It is sufficient to prove that these random variables are bounded above by a sequence of $L_2$--bounded random variables. The queue size is bounded above by the number of arrivals, so, for all $t\geq 0$ 
\begin{equation}\label{QUI}
\sum_{j\in\mJ} \bar{Q}^{(c)}_j(t) \leq  1 + \sum_{j\in\mJ} \sum_{s=1}^{\lceil ct \rceil} \frac{a_j(s)}{c}.
\end{equation}
By assumption, $a_j(s)$ has finite variance; thus, the right-hand side of inequality \eqref{QUI} is $L_2$-bounded; and so, $\sum_{j\in\mJ} \bar{Q}^{(c)}_j(t)$ is uniformly integrable. 
\end{proof}

\section*{Acknowledgment}
The author would like to thank Devavrat Shah and Yuan Zhong for their comments on this article.

\bibliographystyle{amsplain}
\bibliography{MW-AB}

\providecommand{\bysame}{\leavevmode\hbox to3em{\hrulefill}\thinspace}
\providecommand{\MR}{\relax\ifhmode\unskip\space\fi MR }
\providecommand{\MRhref}[2]{%
  \href{http://www.ams.org/mathscinet-getitem?mr=#1}{#2}
}
\providecommand{\href}[2]{#2}
\begin{thebibliography}{10}

\bibitem{AKRSVW04}
M.~Andrews, K.~Kumaran, K.~Ramanan, A.~Stolyar, R.~Vijayakumar, and P.~Whiting,
  \emph{Scheduling in a queuing system with asynchronously varying service
  rates}, Probability in the Engineering and Informational Sciences \textbf{18}
  (2004), no.~2, 191--217.

\bibitem{AyMa09}
U.~Ayesta and M.~Mandjes, \emph{Bandwidth-sharing networks under a diffusion
  scaling}, Annals of Operations Research \textbf{170} (2009), no.~1, 41--58.

\bibitem{BoMa01}
T.~Bonald and L.~Massouli\'e, \emph{Impact of fairness on internet
  performance}, Proc. of ACM Sigmetrics \textbf{29} (2001), 82--91.

\bibitem{Br08}
Maury Bramson, \emph{Stability of queueing networks}, Probab. Surv. \textbf{5}
  (2008), 169--345.

\bibitem{Da95}
J.G. Dai, \emph{On positive harris recurrence of multiclass queueing networks:
  a unified approach via fluid limit models}, The Annals of Applied Probability
  \textbf{5} (1995), no.~1, 49--77.

\bibitem{Da95b}
JG~Dai, \emph{Stability of open multiclass queueing networks via fluid models},
  IMA Volumes in Mathematics and Its Applications \textbf{71} (1995), 71--71.

\bibitem{DaLi05}
JG~Dai and W.~Lin, \emph{Maximum pressure policies in stochastic processing
  networks}, Operations Research \textbf{53} (2005), no.~2, 197--218.

\bibitem{DaPr00}
J.G. Dai and B.~Prabhakar, \emph{The throughput of data switches with and
  without speedup}, INFOCOM 2000. Nineteenth Annual Joint Conference of the
  IEEE Computer and Communications Societies. Proceedings. IEEE, vol.~2, 2000,
  pp.~556 --564 vol.2.

\bibitem{VLK01}
G.~De~Veciana, T.J. Lee, and T.~Konstantopoulos, \emph{Stability and
  performance analysis of networks supporting elastic services}, Networking,
  IEEE/ACM Transactions on \textbf{9} (2001), no.~1, 2--14.

\bibitem{EBZ07}
R.~Egorova, S.~Borst, and B.~Zwart, \emph{Bandwidth-sharing networks in
  overload}, Performance Evaluation \textbf{64} (2007), no.~9, 978--993.

\bibitem{GoWi09}
H.C. Gromoll and R.J. Williams, \emph{Fluid limits for networks with bandwidth
  sharing and general document size distributions}, The Annals of Applied
  Probability \textbf{19} (2009), no.~1, 243--280.

\bibitem{JMMT11}
K.~Jagannathan, M.~Markakis, E.~Modiano, and J.N. Tsitsiklis, \emph{Queue
  length asymptotics for generalized max-weight scheduling in the presence of
  heavy-tailed traffic}, INFOCOM, 2011 Proceedings IEEE, april 2011, pp.~2318
  --2326.

\bibitem{JiWa10}
L.~Jiang and J.~Walrand, \emph{A distributed csma algorithm for throughput and
  utility maximization in wireless networks}, IEEE/ACM Transactions on
  Networking (TON) \textbf{18} (2010), no.~3, 960--972.

\bibitem{KKLW09}
WN~Kang, FP~Kelly, NH~Lee, and RJ~Williams, \emph{State space collapse and
  diffusion approximation for a network operating under a fair bandwidth
  sharing policy}, The Annals of Applied Probability \textbf{19} (2009), no.~5,
  1719--1780.

\bibitem{Ke97}
F.~Kelly, \emph{Charging and rate control for elastic traffic}, European
  transactions on Telecommunications \textbf{8} (1997), no.~1, 33--37.

\bibitem{KeWi04}
F.~P. Kelly and R.~J. Williams, \emph{Fluid model for a network operating under
  a fair bandwidth-sharing policy}, Ann. Appl. Probab. \textbf{14} (2004),
  no.~3, 1055--1083.

\bibitem{Le12}
N.H. Lee, \emph{A sufficient condition for stochastic stability of an internet
  congestion control model in terms of fluid model stability}, Ph.D. thesis,
  University of California at San Diego, 2012.

\bibitem{MaSt04}
A.~Mandelbaum and A.L. Stolyar, \emph{Scheduling flexible servers with convex
  delay costs: Heavy-traffic optimality of the generalized c$\mu$-rule},
  Operations Research \textbf{52} (2004), no.~6, 836--855.

\bibitem{Ma07}
L.~Massouli{\'e}, \emph{Structural properties of proportional fairness:
  stability and insensitivity}, The Annals of Applied Probability \textbf{17}
  (2007), no.~3, 809--839.

\bibitem{MaRo99}
L.~Massoulie and J.~Roberts, \emph{Bandwidth sharing: objectives and
  algorithms}, Networking, IEEE/ACM Transactions on \textbf{10} (2002), no.~3,
  320 --328.

\bibitem{MMAW99}
N.~McKeown, A.~Mekkittikul, V.~Anantharam, and J.~Walrand, \emph{Achieving
  100\% throughput in an input-queued switch}, Communications, IEEE
  Transactions on \textbf{47} (1999), no.~8, 1260--1267.

\bibitem{Me09}
S.~Meyn, \emph{Stability and asymptotic optimality of generalized maxweight
  policies}, SIAM Journal on Control and Optimization \textbf{47} (2009),
  no.~6, 3259--3294.

\bibitem{MoWa00}
J.~Mo and J.~Walrand, \emph{Fair end-to-end window-based congestion control},
  Networking, IEEE/ACM Transactions on \textbf{8} (2000), no.~5, 556 --567.

\bibitem{MoSh10}
C.~C. Moallemi and D.~Shah, \emph{On the flow-level dynamics of a
  packet-switched network}, SIGMETRICS, 2010, pp.~83--94.

\bibitem{Ne10}
M.J. Neely, \emph{Universal scheduling for networks with arbitrary traffic,
  channels, and mobility}, Decision and Control (CDC), 2010 49th IEEE
  Conference on, dec. 2010, pp.~1822 --1829.

\bibitem{PTFA12}
F.~Paganini, A.~Tang, A.~Ferragut, and L.L.H. Andrew, \emph{Network stability
  under alpha fair bandwidth allocation with general file size distribution},
  Automatic Control, IEEE Transactions on \textbf{57} (2012), no.~3, 579--591.

\bibitem{Ro10}
P.~Robert, \emph{Stochastic networks and queues}, Stochastic Modelling and
  Applied Probability, Springer, 2010.

\bibitem{RySt92}
A.N. Rybko and AL~Stolyar, \emph{Ergodicity of stochastic processes describing
  the operation of open queueing networks}, Problemy Peredachi Informatsii
  \textbf{28} (1992), no.~3, 3--26.

\bibitem{ShSh12}
D.~Shah and J.~Shin, \emph{Randomized scheduling algorithm for queueing
  networks}, The Annals of Applied Probability \textbf{22} (2012), no.~1,
  128--171.

\bibitem{ShTsTs11}
D.~Shah, D.N.C. Tse, and J.N. Tsitsiklis, \emph{Hardness of low delay network
  scheduling}, Information Theory, IEEE Transactions on \textbf{57} (2011),
  no.~12, 7810--7817.

\bibitem{ShWi06}
D.~Shah and D.~Wischik, \emph{Optimal scheduling algorithms for input-queued
  switches}, INFOCOM 2006. 25th IEEE International Conference on Computer
  Communications. Proceedings, april 2006, pp.~1 --11.

\bibitem{ShWi12}
\bysame, \emph{Switched networks with maximum weight policies: Fluid
  approximation and multiplicative state space collapse}, The Annals of Applied
  Probability \textbf{22} (2012), no.~1, 70--127.

\bibitem{ShWaZh12}
Devavrat Shah, Neil Walton, and Yuan Zhong, \emph{Optimal queue-size scaling in
  switched networks}, CoRR \textbf{abs/1110.4697} (2011).

\bibitem{ShWi11}
Devavrat Shah and Damon Wischik, \emph{Fluid models of congestion collapse in
  overloaded switched networks}, Queueing Syst. \textbf{69} (2011), no.~2,
  121--143.

\bibitem{Sr04}
R.~Srikant, \emph{The mathematics of internet congestion control}, Birkhauser,
  2004.

\bibitem{St04}
A.L. Stolyar, \emph{Maxweight scheduling in a generalized switch: State space
  collapse and workload minimization in heavy traffic}, The Annals of Applied
  Probability \textbf{14} (2004), no.~1, 1--53.

\bibitem{Su10}
V.G. Subramanian, \emph{Large deviations of max-weight scheduling policies on
  convex rate regions}, Mathematics of Operations Research \textbf{35} (2010),
  no.~4, 881--910.

\bibitem{Su11}
V.G. Subramanian, T.~Javidi, and S.~Kittipiyakul, \emph{Many-sources large
  deviations for max-weight scheduling}, Information Theory, IEEE Transactions
  on \textbf{57} (2011), no.~4, 2151--2168.

\bibitem{TaSr12}
Bo~(Rambo) Tan and R.~Srikant, \emph{Online advertisement, optimization and
  stochastic networks}, IEEE Trans. Automat. Contr. \textbf{57} (2012), no.~11,
  2854--2868.

\bibitem{Ta98}
L.~Tassiulas, \emph{Linear complexity algorithms for maximum throughput in
  radio networks and input queued switches}, INFOCOM '98. Seventeenth Annual
  Joint Conference of the IEEE Computer and Communications Societies.
  Proceedings. IEEE, vol.~2, 1998, pp.~533 --539.

\bibitem{TaEp92}
L.~Tassiulas and A.~Ephremides, \emph{Stability properties of constrained
  queueing systems and scheduling policies for maximum throughput in multihop
  radio networks}, Automatic Control, IEEE Transactions on \textbf{37} (1992),
  no.~12, 1936--1948.

\bibitem{VBY11}
P.~van~de Ven, S.~Borst, and L.~Ying, \emph{Spatial inefficiency of maxweight
  scheduling}, Modeling and Optimization in Mobile, Ad Hoc and Wireless
  Networks (WiOpt), 2011 International Symposium on, IEEE, 2011, pp.~62--69.

\bibitem{Va09}
P.~Varaiya, \emph{A universal feedback control policy for arbitrary networks of
  signalized intersections}, preprint (2009).

\bibitem{Ye03}
Heng-Qing Ye, \emph{Stability of data networks under an optimization-based
  bandwidth allocation}, Automatic Control, IEEE Transactions on \textbf{48}
  (2003), no.~7, 1238 -- 1242.

\bibitem{Zh12}
Y.~Zhong, \emph{Resource allocation in stochastic processing networks:
  Performance and scaling}, Ph.D. thesis, MIT, 2012.

\end{thebibliography}

\end{document}